\newtheorem{theorem}{Theorem}[section]
\newtheorem{corollary}[theorem]{Corollary}
\newtheorem{lemma}[theorem]{Lemma}
\newtheorem{remark}[theorem]{Remark}
\newcommand{\scr}{\mathscr}   
      \def\@setcopyright{}
      \def\serieslogo@{}
\begin{document}

%



   \author{Amin  Bahmanian}
   \address{Department of Mathematics and Statistics
 Auburn University, Auburn, AL USA   36849-5310}
   \curraddr{Department of Mathematics and Statistics, University of Ottawa, 585 King Edward, 
     Ottawa, ON Canada  K1N 6N5 }

   \email{mbahmani@uottawa.ca}



   \title[Connected Baranyai Theorem]{Connected Baranyai's Theorem} 


   \begin{abstract}
Let $K_n^h=(V,\binom{V}{h})$ be the complete $h$-uniform hypergraph on vertex set $V$ with $|V|=n$. Baranyai showed that  $K_n^h$ can be expressed as the union of edge-disjoint $r$-regular  factors if and only if $h$ divides $rn$ and $r$ divides $\binom{n-1}{h-1}$.  
Using a new proof technique, in this paper we prove that $\lambda K_n^h$ can be expressed as the union $\mathcal G_1\cup \ldots \cup\mathcal G_k$ of $k$ edge-disjoint  factors, where for $1\leq i\leq k$, $\mathcal G_i$ is $r_i$-regular, if and only if (i) $h$ divides $r_in$ for  $1\leq i\leq  k$, and (ii) $\sum_{i=1}^k r_i=\lambda \binom{n-1}{h-1}$. Moreover, for any $i$ ($1\leq i\leq k$) for which $r_i\geq 2$,  this new technique allows us to  guarantee that $\mathcal G_i$ is connected, 
 generalizing Baranyai's theorem, and answering a question by Katona.
   \end{abstract}


   \keywords{factorization, edge-colorings, decompositions, Baranyai's theorem, connectivity, laminar families, detachments}

   \date{\today}


   \maketitle



\section {Introduction} 
A {\it hypergraph} $\mathcal G$ is a pair $(V,E)$ where $V$ is a finite set called the vertex set, $E$ is the edge multiset, where every edge is itself a multi-subset of $V$. This means that not only can an edge  occur multiple times in $E$, but also each vertex can have multiple occurrences within an edge.  
The total number of occurrences of a  vertex $v$ among all edges of $E$ is called the {\it degree}, $d_{\mathcal G}(v)$ of $v$ in $\mathcal G$. For a positive integer $h$, $\mathcal G$ is said to be $h$-{\it uniform} if $|e|=h$ for each $e\in E$.   
For  positive integers $r, r_1,\dots,r_k$, an $r$-factor in a hypergraph $\mathcal G$ is a spanning  $r$-regular sub-hypergraph, and an {\it $(r_1,\dots,r_k)$-factorization}  is a partition of the edge set of $\mathcal G$ into $F_1,\dots, F_k$ where $F_i$ is an $r_i$-factor for $1\leq i\leq k$, abbreviate $(r,\dots,r)$-factorization to $r$-factorization.  
The hypergraph $K_n^h:=(V,\binom{V}{h})$ with $|V|=n$ (by $\binom{V}{h}$ we mean the collection of all $h$-subsets of $V$) is called a  {\it complete} $h$-uniform hypergraph. Avoiding trivial cases, we assume that $n>h$. 
 Baranyai proved that:
\begin{theorem}\textup{(Baranyai \cite{Baran75})}\label{baranyaift}
If $a_1,\dots,a_s$ are positive integers such that $\sum_{i=1}^s a_i=\binom{n}{h}$, then the edges of $K_n^h=(V,E)$ can be partitioned into almost regular hypergraphs $(V,E_i)$ so that $|E_i|=a_i$ for $1\leq i\leq s$. 
\end{theorem}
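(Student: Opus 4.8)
The plan is to follow the spirit of Baranyai's original argument: build the partition one vertex at a time, at each step replacing a natural \emph{fractional} split of the current classes by an integral one via a rounding lemma proved with network flows. Identify $V$ with $[n]$, write $q_i=\lfloor ha_i/n\rfloor$ and $t_i=ha_i-nq_i$ (a nonnegative integer with $t_i<n$), and recall that ``$(V,E_i)$ is almost regular'' means every degree equals $q_i$ or $q_i+1$; since the degrees in $(V,E_i)$ sum to $h|E_i|=ha_i=nq_i+t_i$, exactly $t_i$ vertices of $(V,E_i)$ must receive degree $q_i+1$. For $0\le m\le n$ I want nonnegative integers $c^{(m)}_{i,A}$, indexed by $i\in[s]$ and $A\subseteq[m]$, to be thought of as ``the number of edges assigned to class $i$ whose intersection with $[m]$ equals $A$'', subject to: (i) $\sum_i c^{(m)}_{i,A}=\binom{n-m}{h-|A|}$ for every $A\subseteq[m]$ (the number of $h$-subsets of $[n]$ with trace $A$ on $[m]$); (ii) $\sum_A c^{(m)}_{i,A}=a_i$ for every $i$; (iii) $\sum_{A\ni j}c^{(m)}_{i,A}\in\{q_i,q_i+1\}$ for every $j\in[m]$ and every $i$; and the bookkeeping condition (iv) that $P^{(m)}_i:=|\{j\in[m]:\sum_{A\ni j}c^{(m)}_{i,A}=q_i+1\}|$ satisfies $\max\{0,t_i-(n-m)\}\le P^{(m)}_i\le\min\{m,t_i\}$. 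The base case $m=0$ is $c^{(0)}_{i,\emptyset}=a_i$, which uses only $\sum_i a_i=\binom nh$; and at $m=n$, (i) puts each $h$-set into exactly one class, (ii) fixes the sizes $a_i$, and (iii) gives almost-regularity, so $c^{(n)}$ describes the desired partition.

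For the inductive step $m\to m+1$ I would split each $A\subseteq[m]$ into $A$ and $A\cup\{m+1\}$ by choosing $y_{i,A}:=c^{(m+1)}_{i,A\cup\{m+1\}}$ and setting $c^{(m+1)}_{i,A}:=c^{(m)}_{i,A}-y_{i,A}$. By Pascal's identity, requirement (i) at level $m+1$ becomes $\sum_i y_{i,A}=\binom{n-m-1}{h-|A|-1}=:d_A$ for every $A$; requirement (ii) is preserved automatically; the degrees of $1,\dots,m$ do not change; and the only new constraint is that the degree of $m+1$ in class $i$, namely $\sum_A y_{i,A}$, lie in $\{q_i,q_i+1\}$. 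The natural fractional candidate is $y^*_{i,A}=c^{(m)}_{i,A}\cdot\frac{h-|A|}{n-m}$. One checks directly that $0\le y^*_{i,A}\le c^{(m)}_{i,A}$, that $\sum_i y^*_{i,A}=d_A$ (an integer), and, using (iii) and (iv) at level $m$, that $\sum_A y^*_{i,A}=q_i+\frac{t_i-P^{(m)}_i}{n-m}\in[q_i,q_i+1]$. I then apply the rounding lemma below to obtain an integer matrix $(y_{i,A})$ with $y_{i,A}\in\{\lfloor y^*_{i,A}\rfloor,\lceil y^*_{i,A}\rceil\}$, with the same (integer) column sums $d_A$, and with row sums in $\{\lfloor\sum_A y^*_{i,A}\rfloor,\lceil\sum_A y^*_{i,A}\rceil\}\subseteq\{q_i,q_i+1\}$. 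Since rounding to floor/ceil keeps $0\le y_{i,A}\le c^{(m)}_{i,A}$, conditions (i)--(iii) hold at level $m+1$; and (iv) is maintained because in the two extreme cases $P^{(m)}_i=t_i$ and $P^{(m)}_i=t_i-(n-m)$ the value $\sum_A y^*_{i,A}$ is already an integer, so the rounding there is forced and $P^{(m+1)}_i=P^{(m)}_i+[\,\sum_A y_{i,A}=q_i+1\,]$ lands in the prescribed window, while in all other cases $P^{(m)}_i$ has slack on both sides and moves by at most $1$.

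The rounding lemma I need is the classical one: for any real matrix $M=(m_{ij})$ there is an integer matrix $\bar M=(\bar m_{ij})$ with $\bar m_{ij}\in\{\lfloor m_{ij}\rfloor,\lceil m_{ij}\rceil\}$, each of whose row sums and column sums equals $\lfloor\cdot\rfloor$ or $\lceil\cdot\rceil$ of the corresponding sum of $M$ (hence equals that sum whenever it is an integer). I would prove it by a flow network with a source $s$, a node $R_i$ for each row, a node $C_j$ for each column, and a sink $u$: give the arcs $s\to R_i$ lower/upper bounds $\lfloor\sum_j m_{ij}\rfloor,\lceil\sum_j m_{ij}\rceil$, the arcs $R_i\to C_j$ bounds $\lfloor m_{ij}\rfloor,\lceil m_{ij}\rceil$, and the arcs $C_j\to u$ bounds $\lfloor\sum_i m_{ij}\rfloor,\lceil\sum_i m_{ij}\rceil$. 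The original real data is a feasible fractional flow, so by the integrality property of network flows (a flow network with integer arc bounds that admits a feasible flow admits an integer feasible flow), an integer feasible flow exists, and its values on the arcs $R_i\to C_j$ form $\bar M$.

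The step I expect to be the real obstacle is getting the bookkeeping condition (iv) right and checking its boundary cases: the whole scheme works only because the fractional split $y^*$ \emph{automatically} satisfies $\sum_A y^*_{i,A}\in[q_i,q_i+1]$, which in turn forces one to carry the two-sided estimate on $P^{(m)}_i$ through the induction rather than merely the almost-regularity (iii). Once that invariant is correctly set up, the remaining ingredients --- Pascal's identity, the elementary bounds on $y^*$, and the max-flow argument for the rounding lemma --- are routine.
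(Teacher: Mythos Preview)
The paper does not give its own proof of this statement; Theorem~\ref{baranyaift} is quoted from Baranyai's original paper as background, and nothing in Section~\ref{proofsmt} is aimed at it. Your argument is correct and is essentially Baranyai's classical proof: the vertex-by-vertex induction together with the matrix-rounding lemma proved via integral network flows. Your explicit bookkeeping invariant (iv) on $P^{(m)}_i$ is exactly what forces the fractional row sums $\sum_A y^*_{i,A}=q_i+(t_i-P^{(m)}_i)/(n-m)$ to lie in $[q_i,q_i+1]$, and your boundary analysis (that at the two extreme values of $P^{(m)}_i$ the fractional sum is already an integer, so the update is forced) is right.

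That said, it is worth contrasting your approach with what the paper \emph{does} prove. The paper's main result (Theorem~\ref{connbaranyaift}, via Theorem~\ref{seemstmainres}) reproves the regular-factorization special case of Baranyai by a genuinely different mechanism. There too one peels off one vertex at a time, but the rounding tool is Nash-Williams' laminar-family lemma (Lemma~\ref{laminarlem}) rather than flow-based matrix rounding: the hinges at the amalgamated vertex $\alpha$ are organised into two laminar families---one recording colour classes, $\alpha$-wings, and individual edges; the other recording edge-types $H_p^U$---and a single set $A$ is found that meets every member of both families in nearly the right proportion. What this buys over the flow argument is simultaneous control of how the detachment interacts with the $\alpha$-wing structure of each colour class, which is precisely the handle needed to preserve connectivity; your bipartite flow network has no node that sees wings, so it cannot deliver the connected strengthening. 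Conversely, your approach proves the more general ``almost regular with prescribed sizes $a_i$'' statement directly, whereas the paper's laminar set-up is tailored to the exactly regular case.
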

In particular,  if $h\divides r_in$ and $\sum_{i=1}^k r_i=\lambda \binom{n-1}{h-1}$, then $K_n^h$  is $(r_1,\dots,r_k)$-factorizable. It is  natural to ask if we can obtain a connected factorization; that is, a factorization in which each factor   is a connected hypergraph.  
Let $m$ be the least common multiple of $h$ and $n$, and let $a=m/h$. Define the set of edges
$$\mathscr K=\{ \{1,\dots,h\}, \{h+1,\dots,2h\},\dots, \{(a-1)h+1,(a-1)h+2,\dots,ah\}\},$$ 
where the elements of the edges are considered mod $n$. The families obtained from $\mathscr K$ by permuting the elements of the underlying set $\{n\}$ are called {\it wreaths}.  If $h$ divides $n$, then a wreath is just a partition. Baranyai and Katona conjectured that the edge set of $K_n^h$ can be decomposed into disjoint wreaths \cite{Kat91}. In connection with this conjecture, Katona (private communication) suggested the problem of finding a connected factorization for $K_n^h$. In this paper, we solve this problem. 

If we replace every edge $e$ of $K_n^h$ by $\lambda$ copies of $e$, then we denote the new hypergraph by $\lambda K_n^h$.   In this paper, the main result is the  following theorem:
\begin{theorem}\label{connbaranyaift}
 $\lambda K_n^h$ is $(r_1,\dots,r_k)$-factorizable if and only if $h\divides r_in$  for $1\leq i\leq k$, and $\sum_{i=1}^k r_i=\lambda \binom{n-1}{h-1}$.
 Moreover, for $1\leq i\leq k$, if $r_i\geq 2$, then we can guarantee that the $r_i$-factor is connected.
 \end{theorem}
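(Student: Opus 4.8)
The necessity of the conditions is straightforward. If $\mathcal G_i$ is the $r_i$-regular factor, then counting vertex--edge incidences in the $h$-uniform hypergraph $\mathcal G_i$ gives $h\,|E(\mathcal G_i)|=r_in$, so $h\divides r_in$; and for any vertex $v$, $\sum_{i=1}^k r_i=\sum_{i=1}^k d_{\mathcal G_i}(v)=d_{\la K_n^h}(v)=\la\binom{n-1}{h-1}$.

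For sufficiency, the plan is an amalgamation--detachment argument. First, amalgamate all $n$ vertices of $\la K_n^h$ into a single vertex $\alpha$; the result is a hypergraph $\mathcal F$ on one vertex consisting of $\la\binom nh$ edges, each being the multiset in which $\alpha$ occurs $h$ times, so $d_{\mathcal F}(\alpha)=h\la\binom nh=\la n\binom{n-1}{h-1}$. Next, color the edges of $\mathcal F$ with $k$ colors so that color $i$ receives exactly $r_in/h$ edges; this is possible and consistent because $h\divides r_in$ and $\sum_i r_in/h=(n/h)\la\binom{n-1}{h-1}=\la\binom nh$. It now suffices to detach $\alpha$ into $n$ vertices $v_1,\dots,v_n$ so that (a) the underlying hypergraph is exactly $\la K_n^h$ (every $h$-subset occurs with multiplicity $\la$, and no edge repeats a vertex), (b) in color $i$ every $v_j$ has degree $r_i$, and (c) for every $i$ with $r_i\geq 2$ the color-$i$ subhypergraph is connected. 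The $k$ color classes of the detached hypergraph are then the desired factors, with those of regularity at least $2$ connected.

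The detachment should be carried out one vertex at a time: I would prove, by induction, a \emph{single-detachment lemma} stating that a vertex $\alpha$ of an appropriately colored hypergraph can be split into two vertices $\alpha'$ and $\beta$ so that prescribed per-color degree conditions, prescribed edge-containment counts for the members of a suitable \emph{laminar family} of subsets of the vertex set, and connectivity of each color class of regularity $\geq 2$ are all maintained; iterating $n-1$ times detaches $\alpha$ into $n$ vertices. The laminar family is the bookkeeping device that enforces requirement (a): by controlling, at each stage, the number of edges ``trapped inside'' each member of the family (and forbidding any new edge from repeating a vertex), one guarantees that when the process terminates every $h$-subset of $\{v_1,\dots,v_n\}$ carries exactly $\la$ edges. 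Fairness/near-regularity of the splitting, in the spirit of Nash-Williams' detachment theorems, yields (b).

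The main obstacle is condition (c): ensuring that no color class of regularity at least $2$ is ever disconnected by a detachment. The issue is to decide, for each color-$i$ edge through $\alpha$, whether the newly created vertex $\beta$ is placed on it, in such a way that the color-$i$ subhypergraph stays connected; and this must be done simultaneously for every color, compatibly with the degree and laminar constraints. This is exactly where the hypothesis $r_i\geq 2$ is essential --- for $r_i=1$ the color class is a perfect matching, necessarily disconnected when $n>h$, whereas $r_i\geq 2$ provides the slack needed to keep the class connected. I expect this step to require a dedicated argument inside each color class, for instance maintaining the stronger invariant that $\alpha$ is not a cut vertex of the color-$i$ subhypergraph (so that peeling $\beta$ off cannot separate it), realized through an Eulerian-type or connected-spanning-subgraph exchange when choosing which edges receive $\beta$. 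Reconciling the degree, laminar, and connectivity requirements at a single detachment step, for all colors at once, is the delicate point, and is presumably what the ``new proof technique'' advertised in the abstract is designed to handle.
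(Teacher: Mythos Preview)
Your overall strategy---amalgamate to a single vertex, color the loops, then detach one vertex at a time via Nash-Williams' laminar lemma---is exactly the paper's approach, and your necessity argument and base case match. But two points in your sketch miss the mark and leave a real gap.

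First, the laminar families are not families of vertex subsets; they are families of subsets of the \emph{hinge set} $H(\alpha)$ (the incidences of $\alpha$ with edges, counted with multiplicity). One family $\scr B$ consists of the hinge sets $H_p^U$ attached to edges of type $\{\alpha^p\}\cup U$, which controls the multiplicities (your condition (a)); the other family $\scr A$ contains the per-color hinge sets $H_i$ (your condition (b)) together with the hinge set $H_e$ of each individual edge (to prevent $\beta$ from acquiring multiplicity $\geq 2$ in any edge).

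Second---and this is the substantive gap---connectivity is \emph{not} handled by a separate Eulerian or exchange argument after the fact. The paper folds it into the very same application of Nash-Williams by enlarging $\scr A$: for each color $i$ one adds the hinge sets $H_W$ of the $\alpha$-wings $W$ of $\mathcal G_i$, and the set $H^i=\bigcup\{H_W:d_W(\alpha)\geq 2\}$. The approximate splitting $|A\cap H_W|\approx |H_W|/(n-\ell+1)$ then guarantees that some wing with $d_W(\alpha)\geq 2$ loses at least one but not all of its hinges to $\beta$, which is precisely the criterion for $\mathcal G'_i$ to remain connected. To make this work inductively one carries the extra invariant that every $\alpha$-wing of $\mathcal G_i$ has $d_W(\alpha)\geq 2$ (equivalently $|H^i|=d_i(\alpha)$), and checks that this invariant persists after each detachment while $\ell\leq n-2$. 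Your proposed invariant ``$\alpha$ is not a cut vertex'' is too weak: it does not rule out wings with $d_W(\alpha)=1$, which could be stripped off entirely and disconnect the color class.
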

In particular if $\lambda=1$, and $h=r_1=\dots=r_k=2$, Theorem \ref{connbaranyaift} implies the classical result of Walecki \cite{L} that the edge set of $K_n$ can be partitioned into Hamiltonian cycles if and only if $n$ is odd. Here we list some other interesting special consequences of Theorem \ref{connbaranyaift}:

\begin{corollary}\label{connbaranyaiftcor1}
 $K_n^h$ is connected $2$-factorizable if and only if $\binom{n-1}{h-1}$ is even and $h\divides 2n$.
\end{corollary}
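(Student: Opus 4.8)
The plan is to obtain Corollary~\ref{connbaranyaiftcor1} as the specialization of Theorem~\ref{connbaranyaift} to the case $\lambda=1$ and $r_1=\cdots=r_k=2$. Under this choice, a $(2,\dots,2)$-factorization of $K_n^h$ into $k$ connected factors exists (by Theorem~\ref{connbaranyaift} together with its ``moreover'' clause, which applies since each $r_i=2\geq 2$) if and only if $h\divides 2n$ and $2k=\binom{n-1}{h-1}$. The number of factors $k$ is at our disposal, so the second equation can be solved for a positive integer $k$ precisely when $\binom{n-1}{h-1}$ is even, in which case $k=\binom{n-1}{h-1}/2$. Thus the two numerical hypotheses of Theorem~\ref{connbaranyaift} collapse to exactly the two conditions in the statement, and the whole proof is a matter of verifying this bookkeeping in both directions.

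For necessity, I would argue as follows: if $K_n^h$ admits a connected $2$-factorization, say into $k$ factors, then in particular (forgetting connectivity) it is $(2,\dots,2)$-factorizable, so the ``only if'' direction of Theorem~\ref{connbaranyaift} with $\lambda=1$ forces $h\divides 2n$ and $\sum_{i=1}^k 2=\binom{n-1}{h-1}$; the latter says $\binom{n-1}{h-1}=2k$, hence even.

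For sufficiency, assume $\binom{n-1}{h-1}$ is even and $h\divides 2n$. Set $\lambda=1$, $k=\binom{n-1}{h-1}/2$, and $r_i=2$ for $1\leq i\leq k$. Then $h\divides r_in=2n$ for every $i$, and $\sum_{i=1}^k r_i=2k=\binom{n-1}{h-1}=\lambda\binom{n-1}{h-1}$, so all hypotheses of Theorem~\ref{connbaranyaift} are met. Its conclusion yields a $(2,\dots,2)$-factorization of $K_n^h$, and since $r_i=2\geq 2$ for each $i$, the ``moreover'' part guarantees every factor is connected; hence $K_n^h$ is connected $2$-factorizable. As a sanity check, when $h=2$ this reduces to Walecki's theorem, since $h\divides 2n$ is automatic and $\binom{n-1}{1}=n-1$ is even exactly when $n$ is odd.

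I do not expect any genuine obstacle here: the corollary is a direct specialization, and the only point that merits a moment's care is the elementary observation that, with all $r_i$ forced to equal $2$, the balance condition $\sum_{i=1}^k r_i=\binom{n-1}{h-1}$ is equivalent to the parity condition ``$\binom{n-1}{h-1}$ is even,'' together with the fact that the ``moreover'' clause of Theorem~\ref{connbaranyaift} does apply because $2\geq 2$.
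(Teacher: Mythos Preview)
Your proposal is correct and is exactly the intended argument: the paper does not give a separate proof of this corollary but presents it as an immediate specialization of Theorem~\ref{connbaranyaift} with $\lambda=1$ and $r_1=\cdots=r_k=2$, which is precisely what you carry out.
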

\begin{corollary}\label{connbaranyaiftcor2}
$K_n^h$ has a connected $\frac{h}{\gcd(n,h)}$-factorization.
\end{corollary}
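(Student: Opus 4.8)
The plan is to derive Corollary~\ref{connbaranyaiftcor2} directly from Theorem~\ref{connbaranyaift} by specializing to $\lambda=1$ and demanding that every factor have the same regularity $r:=h/\gcd(n,h)$. Writing $k$ for the number of factors and setting $r_1=\dots=r_k=r$, Theorem~\ref{connbaranyaift} will produce the desired factorization provided its two hypotheses hold: (i) $h\divides rn$, and (ii) $\sum_{i=1}^k r_i=kr=\binom{n-1}{h-1}$. Condition (ii) is equivalent to asking that $r\divides\binom{n-1}{h-1}$, in which case we simply take $k=\binom{n-1}{h-1}/r$ identical factors. Thus the whole corollary reduces to two number-theoretic facts about $r$.

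Condition (i) is immediate: since $\gcd(n,h)\divides n$, the quantity $rn=h\cdot n/\gcd(n,h)$ is an integer multiple of $h$. The heart of the argument is condition (ii), i.e.\ $\tfrac{h}{\gcd(n,h)}\divides\binom{n-1}{h-1}$. Here I would invoke the elementary identity $h\binom{n}{h}=n\binom{n-1}{h-1}$. Setting $d=\gcd(n,h)$ and dividing this identity by $d$ gives
\[
\frac{h}{d}\binom{n}{h}=\frac{n}{d}\binom{n-1}{h-1}.
\]
Since $\gcd(h/d,\,n/d)=1$ and $h/d$ divides the right-hand side, coprimality forces $r=h/d$ to divide $\binom{n-1}{h-1}$, establishing (ii). The same computation shows, incidentally, that $h\divides rn$ is equivalent to $(h/d)\divides r$, so $r$ is the \emph{least} positive integer satisfying (i); hence Corollary~\ref{connbaranyaiftcor2} is asserting connectivity at the smallest admissible regularity.

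With (i) and (ii) verified, Theorem~\ref{connbaranyaift} yields an $r$-factorization of $K_n^h$ into $k=\binom{n-1}{h-1}/r$ factors, and its ``moreover'' clause guarantees each factor is connected as soon as $r\geq 2$, which is exactly the case $h\nmid n$. The only point needing a word of care is the degenerate case $h\divides n$, where $r=1$ and the factors are perfect matchings: because $n>h$ each such matching has $n/h\geq 2$ components, so connectivity is vacuous and the statement reduces to the existence of a $1$-factorization already supplied by Baranyai's Theorem~\ref{baranyaift}. I do not expect any genuine obstacle here beyond the divisibility bookkeeping; the entire mathematical content of the corollary is the coprimality step displayed above, with all of the combinatorial difficulty having been absorbed into Theorem~\ref{connbaranyaift}.
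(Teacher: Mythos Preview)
Your approach is exactly what the paper intends: the corollary is stated without proof as an immediate specialization of Theorem~\ref{connbaranyaift}, and your verification of the two divisibility hypotheses via the identity $h\binom{n}{h}=n\binom{n-1}{h-1}$ and the coprimality of $h/d$ and $n/d$ is clean and correct.

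One small correction: in the degenerate case $h\mid n$ you write that ``connectivity is vacuous,'' but that is not accurate --- a $1$-factor on $n>h$ vertices genuinely has $n/h\geq 2$ components and is \emph{disconnected}, so the corollary as literally stated is false there. The paper is simply being slightly imprecise (tacitly assuming $h\nmid n$, the only case in which the connectedness assertion has content); you should say so rather than claim the condition is vacuously satisfied.
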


We note that the idea behind the proof of Theorem \ref{connbaranyaift} is based on the amalgamation technique; for some graph amalgamation results, see \cite{BahRod1, BahRod2, H2, HR, MatJohns, Nash87}  and for hypergraph amalgamations, see \cite{BahHyp1, BahHyp2, BahRod3}. Preliminaries are given in Section \ref{prelim}, followed by the proof of Theorem \ref{connbaranyaift} in Section \ref{proofsmt}. 

We end this section with some notation we need to be able to describe hypergraphs that arise in this setting. 

Let $\mathcal G=(V,E)$ be a hypergraph with $\alpha\in V$, and let $U=\{u_1,\dots,u_z\}\subset V\backslash \{\alpha\}$. Recall that each edge is a multi-subset of $V$. We abbreviate an edge of the form  $\{{\underbrace{\alpha,\ldots,\alpha}_p},u_1,\dots,u_z\}$ to $\{\alpha^p,u_1,\dots,u_z\}$. An {\it $h$-loop} incident with $\alpha$ is an edge of the form
$\{\alpha^h\}$, and $m(\alpha^p,U)$ denotes the multiplicity of an edge of the form $\{\alpha^p\}\cup U$. A \textit{k-edge-coloring} of $\mathcal G$ is a mapping $f: E\rightarrow C$, where $C$ is a set of $k$ \textit{colors} (often we use $C=\{1,\ldots,k\}$), and the edges of one color form a \textit{color class}. The sub-hypergraph of $\mathcal G$ induced by the color class $i$ is denoted by $\mathcal G_i$,  abbreviate $d_{\mathcal G_i}(\alpha)$ to $d_i(\alpha)$ and  $m_{\mathcal G_i}(\alpha^p,U)$ to $m_{i}(\alpha^p,U)$. 

\section{Preliminaries}\label{prelim}
A hypergraph is said to be {\it non-trivial} if it has at least one edge. A vertex $\alpha$ in a connected hypergraph $\mathcal G$ is a {\it cut vertex} if there exist two non-trivial sub-hypergraphs $I, J$ of $\mathcal G$ such that $I\cup J=\mathcal G$, $V( I\cap  J)=\alpha$ and $E( I\cap  J)=\varnothing$.  A non-trivial connected sub-hypergraph $W$ of a connected hypergraph $\mathcal G$ is said to be an $\alpha$-wing of $\mathcal G$, if $\alpha$ is not a cut vertex of $W$ and no edge in $E(\mathcal G)\backslash E(W)$ is incident with a vertex in $V(W)\backslash \{\alpha\}$. 
 The set of all $\alpha$-wings of $\mathcal G$ is denoted by $\scr W_\alpha (\mathcal G)$. We remark that $\scr W_\alpha (\mathcal G)=\{\mathcal G\}$ if $\mathcal G$ is non-trivial and connected and $\alpha$ is not a cut vertex of $\mathcal G$. 
 Figure \ref{figure:allwings} illustrates an example of a hypergraph and the set of all its $\alpha$-wings. 
\begin{figure}[htbp]
\begin{center}
\scalebox{.50}
{ \includegraphics {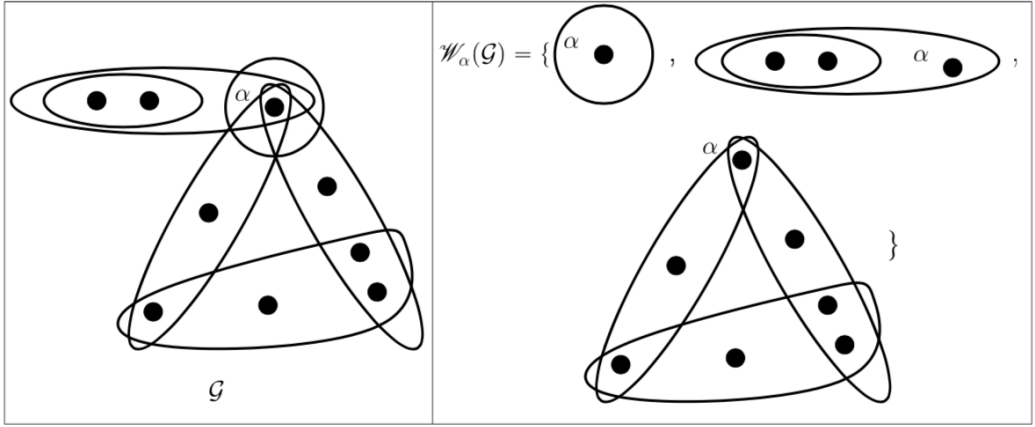} }
\caption{A hypergraph $\mathcal G$ and the set of all its $\alpha$-wings }
\label{figure:allwings}
\end{center}
\end{figure}
If the multiplicity of a vertex $\alpha$ in an edge $e$ is $p$,  we say that $\alpha$ is {\it incident} with $p$ distinct objects, say $h_1(\alpha,e),\dots,h_p(\alpha,e)$. We call these objects {\it hinges}, and we say that $e$ is {\it incident} with  $h_1(\alpha,e),\dots,h_p(\alpha,e)$. The set of all hinges  in $\mathcal G$ incident with $\alpha$ is denoted by $H_{\mathcal G}(\alpha)$; so $|H_{\mathcal G}(\alpha)|$ is in fact  the degree of $\alpha$. 

Intuitively speaking, an {\it $\alpha$-detachment} of $\mathcal G$ is a hypergraph obtained by splitting a vertex $\alpha$ into one or more vertices and sharing the incident hinges and edges  among the subvertices. That is, in an $\alpha$-detachment $\mathcal G'$ of $\mathcal G$ in which we split $\alpha$ into $\alpha$ and $\beta$,  an edge of the form $\{\alpha^p,u_1,\dots,u_z\}$ in $\mathcal G$ will be of the form $\{\alpha^{p-i},\beta^{i},u_1,\dots,u_z\}$ in $\mathcal G'$ for some $i$, $0\leq i\leq p$. Note that a hypergraph and its detachments have the same hinges. Whenever it is not ambiguous, we use $d'$, $m'$, etc. for degree, multiplicity and other hypergraph parameters in $\mathcal G'$.  Also, for an $\alpha$-wing $W$ in $\mathcal G$ and an $\alpha$-detachment $\mathcal G'$, let $W'$  denote the sub-hypergraph of $\mathcal G'$ whose hinges are the same as those in $W$.  Figure \ref{figure:allwingsdetach} illustrates a detachment  $\mathcal G'$ of the hypergraph $\mathcal G$ in Figure \ref{figure:allwings}  and the set of all its $\alpha$-wings. 
\begin{figure}[htbp]
\begin{center}
\scalebox{.50}
{ \includegraphics {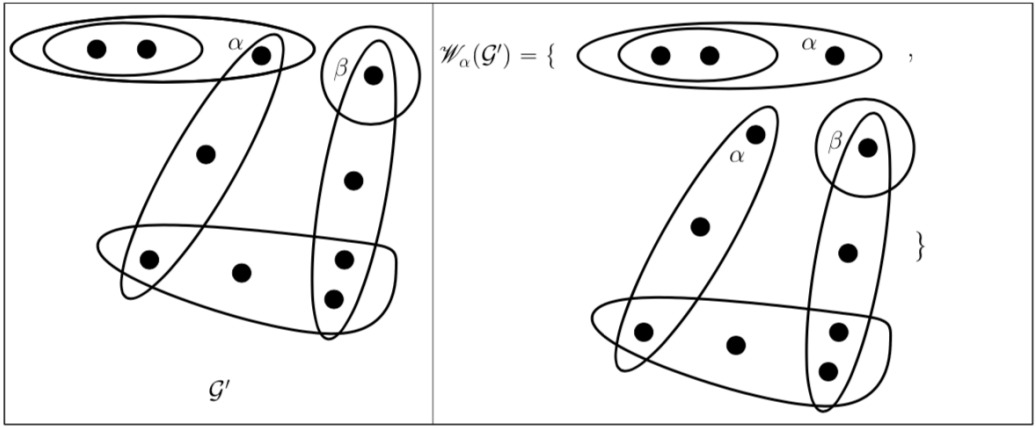} }
\caption{A detachment $\mathcal G'$ of $\mathcal G$ in Figure \ref{figure:allwings} and the set of all its $\alpha$-wings }
\label{figure:allwingsdetach}
\end{center}
\end{figure}
We shall present three lemmas, all of which follow immediately from definitions. 
\begin{lemma} \label{simpleconlem}
Let  $\mathcal G$ be a connected hypergraph. Let $\mathcal G'$ be an $\alpha$-detachment of $\mathcal G$ obtained by splitting a vertex $\alpha$ into two vertices $\alpha$ and $\beta$. Then $\mathcal G'$ is connected if and only if for some $\alpha$-wing  $W\in \scr W_{\alpha}(\mathcal G)$ with $d_W(\alpha)\geq 2$, 
$$1\leq |H_W(\alpha) \cap H_{\mathcal G'}(\beta)| < d_W(\alpha).$$
\end{lemma}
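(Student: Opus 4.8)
The plan is to reduce the connectivity of $\mathcal G'$ to statements about the individual $\alpha$-wings of $\mathcal G$. I would first record, from the definitions, the structural facts needed: the $\alpha$-wings of $\mathcal G$ partition $E(\mathcal G)$, any two distinct wings meet only in $\alpha$, and each wing $W$ has $d_W(\alpha)\ge 1$; moreover, since the $\alpha$-detachment only relabels occurrences of $\alpha$ inside edges, each wing $W$ determines a sub-hypergraph $W'$ of $\mathcal G'$, these $W'$ partition $E(\mathcal G')$, and a walk of $\mathcal G$ meeting $\alpha$ only at its ends lifts to a walk of $\mathcal G'$. Set $j_W:=|H_W(\alpha)\cap H_{\mathcal G'}(\beta)|$; then $0\le j_W\le d_W(\alpha)$, the vertex $\alpha$ is incident with an edge of $W'$ iff $j_W<d_W(\alpha)$, and $\beta$ is incident with an edge of $W'$ iff $j_W>0$. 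Lifting a shortest walk from a vertex $v\ne\alpha$ of $W$ to $\alpha$ shows that every vertex of $W'$ reaches $\{\alpha,\beta\}$, so $W'$ is connected as soon as $\alpha$ and $\beta$ lie in one component of it.

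The heart of the proof is the single-wing claim: \emph{if $0<j_W<d_W(\alpha)$, then $\alpha$ and $\beta$ lie in a common component of $W'$} (hence $W'$ is connected). I would prove this by contraposition against the definition of a cut vertex. Assume $\alpha\not\sim\beta$ in $W'$, let $I'$ be the component of $W'$ containing $\alpha$, and let $J'$ be the sub-hypergraph of $W'$ on the remaining components. Pull $I',J'$ back along the detachment to sub-hypergraphs $I,J$ of $W$. Since each edge of $W$ gives a single edge of $W'$, the edge sets $E(I),E(J)$ partition $E(W)$, so $E(I\cap J)=\varnothing$ and $I\cup J=W$. From $j_W<d_W(\alpha)$, the vertex $\alpha$ lies in an edge of $I'$, so $\alpha\in V(I)$; from $j_W>0$, the vertex $\beta$ lies in an edge of $J'$, whose preimage in $W$ contains $\alpha$, so $\alpha\in V(J)$; hence $I,J$ are non-trivial, and $V(I\cap J)=\{\alpha\}$ (any common vertex other than $\alpha$ would lie in two distinct components of $W'$). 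Thus $\alpha$ is a cut vertex of $W$, contradicting the definition of a wing.

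Granting the single-wing claim, both implications are short. For sufficiency, if some wing $W$ has $1\le j_W<d_W(\alpha)$, then $\alpha\sim\beta$ inside $W'$; every other wing-piece $W_m'$ is connected (it equals $W_m$, or $W_m$ with $\alpha$ renamed to $\beta$, or is covered by the claim) and is incident with $\alpha$ or with $\beta$, so all pieces attach to the component containing both $\alpha$ and $\beta$, and $\mathcal G'$ is connected. For necessity, suppose every wing has $j_W\in\{0,d_W(\alpha)\}$; since $d_W(\alpha)\ge1$, the wings split into the family $\mathcal A$ whose $\alpha$-hinges all remain at $\alpha$ and the family $\mathcal B$ whose $\alpha$-hinges all move to $\beta$. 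As distinct wings share only $\alpha$, in $\mathcal G'$ no edge arising from $\mathcal A$ meets an edge arising from $\mathcal B$ (the $\mathcal B$-edges no longer contain $\alpha$, and no $\mathcal A$-edge contains $\beta$); since $\alpha$ is incident only with $\mathcal A$-edges, or is isolated, and $\beta$ only with $\mathcal B$-edges, or is isolated, $\mathcal G'$ is disconnected. The genuinely delicate step is the single-wing claim: matching a hypothetical disconnection of $W'$ with the multi-edge definition of a cut vertex of $W$, while keeping track of edges in which $\alpha$ occurs with multiplicity $\ge2$ --- which is exactly why the criterion is the hinge inequality $0<j_W<d_W(\alpha)$ rather than one phrased in terms of edges.
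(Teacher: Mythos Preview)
Your argument is correct. The paper does not actually give a proof of this lemma; it states that this and the two lemmas following it ``follow immediately from definitions'' and moves on. What you have written is a careful unpacking of exactly that claim, and the route you take---partitioning $E(\mathcal G)$ into $\alpha$-wings, reducing to the single-wing statement that $0<j_W<d_W(\alpha)$ forces $\alpha\sim\beta$ in $W'$, and proving that by reading a putative disconnection of $W'$ back as a cut-vertex decomposition $(I,J)$ of $W$---is the natural one and is sound. The one point worth making explicit in a final write-up is that the restriction ``with $d_W(\alpha)\ge 2$'' in the statement is automatic (since $1\le j_W<d_W(\alpha)$ is vacuous when $d_W(\alpha)=1$), which is why your necessity argument may freely assume $j_W\in\{0,d_W(\alpha)\}$ for \emph{every} wing. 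There is no competing approach in the paper to compare against.
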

Informally speaking, Lemma \ref{simpleconlem} says that for some $\alpha$-wing $W$ with $d_W(\alpha)\geq 2$, at least one but not all the hinges incident with $\alpha$ in $W$ must be incident with $\beta$ in $\mathcal G'$. 

A family $\scr A$ of sets is \textit{laminar} if, for every pair $A, B$ of sets belonging to $\scr A$: $A\subset B$, or $B\subset A$, or $A\cap B=\varnothing$. 

Let us fix a vertex $\alpha$ of a $k$-edge-colored hypergraph $\mathcal G=(V,E)$. For $1\leq i\leq k$, let $H_i$ be the set of hinges each of which is incident with both $\alpha$ and an edge of color $i$ (so $d_i(\alpha)=|H_i|$). For any edge $e\in E$, let $H_e$ be the collection of hinges incident with both $\alpha$ and  $e$. Clearly, if $e$ is of color $i$, then $H_e\subset H_i$. For an $\alpha$-wing $W$, let $H_W=H_W(\alpha)$.   
For $1\leq i\leq k$, let 
$$
H^i=\bigcup_{W\in \scr W_\alpha(\mathcal G_i), d_W(\alpha)\geq 2} H_W.
$$

\begin{lemma}\label{lamAlem}
Let \begin{eqnarray*}
\scr A & = &   \{H_{1},\ldots,H_{k}\} \cup   \{H^1,\ldots,H^k\} \nonumber\\
& \cup & \{ H_{W} : W  \in \scr W_{ \alpha}(\mathcal G_i), 1\leq i \leq k \}   \cup  \{ H_e : e  \in E\}\nonumber. 
\end{eqnarray*}
Then $\scr A$ is a laminar family of subsets of $H(\alpha)$.
\end{lemma}
For each $p\geq 1$, and each $U \subset V\backslash\{\alpha\}$, let $H^U_p$ be the set of hinges each of which is incident with both $\alpha$ and an edge of the form $\{\alpha^p\}\cup U$ in $\mathcal G$ (so $|H_p^U|=pm(\alpha^p,U)$).
\begin{lemma}\label{lamBlem}
Let 
\begin{eqnarray*} 
\scr B&=&\{H_p^U: p\geq 1, U \subset V\backslash\{\alpha\}\}.
\end{eqnarray*}
Then $\scr B$ is a laminar family of disjoint subsets of $H(\alpha)$.
\end{lemma}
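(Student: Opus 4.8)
The plan is to prove the disjointness assertion directly; laminarity then comes for free, since in any family of pairwise disjoint sets every pair $A,B$ satisfies one of the three defining alternatives of a laminar family (namely $A\cap B=\varnothing$). That each $H_p^U$ is contained in $H(\alpha)$ is immediate from the definition, so the only real content is that the sets $H_p^U$ are mutually disjoint.

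First I would recall the key structural fact about hinges: every hinge in $H_{\mathcal G}(\alpha)$ has the form $h_j(\alpha,e)$ for a uniquely determined edge $e\in E$ incident with $\alpha$ and a uniquely determined index $j$ with $1\leq j\leq p$, where $p:=m(\alpha,e)$ is the multiplicity of $\alpha$ in $e$. Writing $U_e$ for the multiset $e\setminus\{\alpha^p\}$ of the remaining vertices of $e$, we have $e=\{\alpha^p\}\cup U_e$, so by the definition of $H_p^U$ the hinge $h_j(\alpha,e)$ lies in $H^{U_e}_p$.

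Next I would show this is the \emph{only} member of $\scr B$ containing $h_j(\alpha,e)$. Suppose $h_j(\alpha,e)\in H^{U'}_{p'}$ for some $p'\geq 1$ and $U'\subset V\setminus\{\alpha\}$. By definition, $h_j(\alpha,e)$ is then incident with an edge of the form $\{\alpha^{p'}\}\cup U'$; but a hinge is incident with exactly one edge, namely $e$, so $e=\{\alpha^{p'}\}\cup U'$. Comparing with $e=\{\alpha^p\}\cup U_e$ and using $\alpha\notin U',U_e$ forces $p'=p$ and $U'=U_e$. Hence each hinge belongs to at most one member of $\scr B$, so distinct members are disjoint, which finishes the proof; the equality $|H_p^U|=p\,m(\alpha^p,U)$ is then just the bookkeeping observation that the $m(\alpha^p,U)$ edges of the form $\{\alpha^p\}\cup U$ each contribute exactly $p$ hinges to $H_p^U$.

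I do not expect any genuine obstacle: the statement is a straightforward unwinding of the definitions of \emph{hinge} and of $H_p^U$, and the single point that deserves to be stated explicitly is the ``each hinge is incident with exactly one edge'' fact that drives the uniqueness argument above.
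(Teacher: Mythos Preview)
Your argument is correct and matches the paper's stance: the paper does not give a separate proof but groups this lemma with two others that ``follow immediately from definitions,'' and your write-up is precisely the unwinding of those definitions. The one observation worth making explicit---that each hinge is incident with a unique edge, which in turn has a unique $(p,U)$-type---is exactly the content behind the paper's one-line dismissal, so there is no meaningful difference in approach.
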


If $x,y$ are real numbers, $x\approx y$ means $\lfloor y \rfloor \leq x\leq \lceil y \rceil$. We need the following powerful lemma:
\begin{lemma}\textup{(Nash-Williams \cite[Lemma 2]{Nash87})}\label{laminarlem}
If $\scr A, \scr B$ are two laminar families of subsets of a finite set $S$, and $n$ is a positive integer, then there exist a subset $A$ of $S$ such that 
\begin{eqnarray*} 
 |A\cap P|\approx |P|/n \mbox { for every } P\in \scr A \cup \scr B. 
\end{eqnarray*} 
\end{lemma}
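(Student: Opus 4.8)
The plan is to recast the statement as an integer feasibility problem and solve it by a rounding argument. To each element $s\in S$ I would associate a variable $x_s$ and consider the polytope
\[
Q=\Bigl\{x\in[0,1]^S:\ \Bigl\lfloor\tfrac{|P|}{n}\Bigr\rfloor\le \sum_{s\in P}x_s\le\Bigl\lceil\tfrac{|P|}{n}\Bigr\rceil\ \text{ for every } P\in\scr A\cup\scr B\Bigr\}.
\]
First I would observe that $Q$ is nonempty and bounded: the point $x_s=1/n$ for all $s$ satisfies every constraint, since $\sum_{s\in P}x_s=|P|/n$ lies between $\lfloor|P|/n\rfloor$ and $\lceil|P|/n\rceil$, and $0\le 1/n\le1$. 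A point $x\in Q$ with all coordinates in $\{0,1\}$ is exactly the indicator vector of a set $A\subseteq S$ for which $|A\cap P|=\sum_{s\in P}x_s\approx|P|/n$ for every $P\in\scr A\cup\scr B$; so it suffices to produce a single integral point of $Q$.

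The main step is to show that $Q$ is an integral polytope, i.e.\ that every vertex of $Q$ has integer coordinates; then any vertex yields the required $A$. Let $M$ be the $\{0,1\}$ incidence matrix whose rows are indexed by the sets of $\scr A\cup\scr B$ and whose columns are indexed by $S$, so that the defining constraints of $Q$ read $\ell\le Mx\le u$ together with the box constraints $0\le x\le1$, where $\ell,u$ are integer vectors. Appending to $M$ the identity rows coming from the box constraints does not affect total unimodularity, so it is enough to prove that $M$ itself is totally unimodular; by the standard theory of integer polyhedra this forces all vertices of $Q$ to be integral, and since $Q\subseteq[0,1]^S$ they are automatically $0/1$ vectors. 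Because $Q$ is nonempty and bounded it has a vertex, and that vertex is the sought-after $A$.

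It therefore remains to prove the key fact that the incidence matrix $M$ of the union of two laminar families is totally unimodular; this is the heart of the matter. A convenient criterion is that a $\{0,1\}$-matrix fails to be totally unimodular precisely when it contains a square submatrix with exactly two $1$'s in every row and every column whose support forms a single odd cycle (such a submatrix has determinant $\pm2$). Suppose $M$ contained such a submatrix, with rows $P_1,\dots,P_{2k+1}$ and columns $c_1,\dots,c_{2k+1}$; restricting each $P_i$ to these columns produces a $2$-element set, and the $2k+1$ resulting pairs form an odd cycle on $\{c_1,\dots,c_{2k+1}\}$. Since the $P_i$ are drawn from only two laminar families, at least $k+1$ of them lie in one family; any two sets in a single laminar family are nested or disjoint, and for distinct $2$-element restrictions this forces their restrictions to be pairwise disjoint. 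But an odd cycle $C_{2k+1}$ has no matching of size $k+1$, a contradiction. Hence no such submatrix exists and $M$ is totally unimodular, which closes the argument. I expect this last step—pinning down the forbidden configuration and ruling it out through the nested-or-disjoint property of a laminar family—to be the main obstacle, the earlier reductions being routine.
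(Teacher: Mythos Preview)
The paper does not prove this lemma; it simply quotes it from Nash-Williams, so there is no ``paper's proof'' to compare against. I therefore evaluate your argument on its own merits.

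Your overall strategy---reduce to an integer point in a box-constrained polytope and show integrality of the vertices via total unimodularity of the incidence matrix of $\scr A\cup\scr B$---is a standard and correct route to this lemma, and the conclusion that this incidence matrix is TU is indeed a known fact. The gap is in how you \emph{prove} TU. The criterion you invoke, ``a $\{0,1\}$-matrix fails to be TU precisely when it contains an odd-cycle submatrix,'' is not correct: that condition characterises \emph{balanced} $0$--$1$ matrices, a strictly larger class than TU. A concrete counterexample is
\[
N=\begin{pmatrix}1&1&1&1\\1&1&0&0\\1&0&1&0\\1&0&0&1\end{pmatrix},\qquad \det N=-2.
\]
Here $N$ has no $3\times3$ submatrix with exactly two $1$'s per row and column (row~1 forces three $1$'s in any three columns, and among rows $2,3,4$ every pair of the sets $\{1,2\},\{1,3\},\{1,4\}$ meets only in column~1, so one cannot realise a $3$-cycle), yet $N$ is not TU. Your matching-in-$C_{2k+1}$ argument is perfectly valid, but what it actually establishes is that the incidence matrix of two laminar families is \emph{balanced}; balancedness alone does not yield integrality of a polytope with arbitrary integral two-sided bounds $\ell\le Mx\le u$.

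The fix is short. For a single laminar family $\scr L$, build the forest whose nodes are the sets of $\scr L$ (plus a root $S$) with each set attached to its minimal proper superset; orienting all arcs toward the root, the incidence row of $P\in\scr L$ is exactly the indicator of elements whose leaf-to-root path uses the arc leaving $P$. Do this for $\scr A$ and for $\scr B$, glue the two trees at the common root, orienting the $\scr B$-arcs away from the root. Then every element $s\in S$ corresponds to a single directed path in the combined tree, and the stacked incidence matrix of $\scr A\cup\scr B$ becomes, after negating the $\scr B$-rows, a network matrix; network matrices are TU, and row negation preserves TU. With TU in hand, the rest of your argument (nonemptiness via $x\equiv 1/n$, integral vertex of a bounded polytope) goes through unchanged.
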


\section{Proof of the Main Theorem}\label{proofsmt}
To prove Theorem \ref{connbaranyaift}, first we look at the obvious necessary conditions:
\begin{lemma}\label{connbaranyaiftnecc}
If  $\lambda K_n^h$ is connected $(r_1,\dots,r_k)$-factorizable, then 
\begin{itemize}
\item[(i)]  $r_i\geq 2$  for $1\leq i\leq k$, 
\item [(ii)] $h\divides r_in$  for $1\leq i\leq k$, and 
\item [(iii)] $\sum_{i=1}^k r_i=\lambda \binom{n-1}{h-1}$.
\end{itemize}
\end{lemma}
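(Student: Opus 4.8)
The plan is to establish the three conditions by elementary counting, the only slightly non-routine point being the connectivity requirement (i).

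First I would handle (ii) and (iii) together, using the fact that every edge of $\lambda K_n^h$ is a genuine $h$-subset of $V$, so each edge contributes exactly $h$ to the sum of all vertex degrees. Writing $\mathcal G_i=(V,E_i)$, since $\mathcal G_i$ is $r_i$-regular and spanning we get $\sum_{v\in V}d_{\mathcal G_i}(v)=r_in=h|E_i|$, hence $h\divides r_in$, which is (ii). For (iii) I would fix an arbitrary vertex $v\in V$; since $d_{\lambda K_n^h}(v)=\lambda\binom{n-1}{h-1}$ and the factors partition the edge set of $\lambda K_n^h$, we obtain $\lambda\binom{n-1}{h-1}=\sum_{i=1}^k d_{\mathcal G_i}(v)=\sum_{i=1}^k r_i$.

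For (i), I would argue by contradiction, supposing some $r_i\le 1$. If $r_i=0$ then $E_i=\varnothing$, but $n>h\ge 1$ forces $n\ge 2$, so $\mathcal G_i$ is disconnected, a contradiction. If $r_i=1$, then by the degree count above $|E_i|=n/h$. The key observation is that a connected $h$-uniform hypergraph on $n$ vertices has at least $\lceil(n-1)/(h-1)\rceil$ edges: building such a hypergraph one edge at a time, the first edge spans $h$ vertices and every subsequent edge introduces at most $h-1$ new vertices, so $m$ edges can span at most $1+m(h-1)$ vertices, and connectivity forces $n\le 1+m(h-1)$. Since $n>h$ gives $n(h-1)<h(n-1)$, i.e.\ $n/h<(n-1)/(h-1)$, we conclude $|E_i|=n/h<(n-1)/(h-1)\le\lceil(n-1)/(h-1)\rceil$, contradicting the connectedness of $\mathcal G_i$. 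Hence $r_i\ge 2$ for all $i$.

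I do not anticipate any serious obstacle: (ii) and (iii) are pure bookkeeping, and (i) rests only on the standard edge-count lower bound for connected uniform hypergraphs together with the inequality $n>h$.
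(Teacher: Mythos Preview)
Your proof is correct and follows essentially the same approach as the paper: (ii) and (iii) are handled by the same degree-counting identities. For (i) the paper simply declares the necessity ``sufficiently obvious,'' the implicit reason being that a $1$-regular factor of $\lambda K_n^h$ is a perfect matching (its edges partition $V$ into $h$-sets) and hence disconnected once $n>h$; your edge-count lower bound $\lceil (n-1)/(h-1)\rceil$ reaches the same conclusion by a slightly longer but perfectly valid route.
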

\begin{proof} Suppose that $\lambda K_n^h$ is connected $(r_1,\dots,r_k)$-factorizable. The necessity of (i) is sufficiently obvious. Since each edge contributes $h$ to the the sum of the degrees of the vertices in an $r_i$-factor for $1\leq i\leq k$,  we must have (ii). Since each $r_i$-factor is an $r_i$-regular spanning sub-hypergraph for $1\leq i\leq k$, and $\lambda K_n^h$ is  $\lambda \binom{n-1}{h-1}$-regular, we must have (iii).  
\end{proof}

In order to get an inductive proof of Theorem \ref{connbaranyaift} to work, we actually prove the following  seemingly stronger result:
\begin{theorem}\label{seemstmainres}
Let $n, h, \lambda, k, r_1,\dots,r_k$ be positive integers with $n>h$ satisfying \textup{ (i)--(iii)}. For any integer $1\leq \ell \leq n$, there exists an $\ell$-vertex $k$-edge-colored $h$-uniform hypergraph $\mathcal G$ with vertex set $V$ ($\alpha\in V$) 
such that 
\begin{equation} \label{degcolcond}
d_i(u) = \left \{ \begin{array}{ll}
r_i(n-\ell+1) & \mbox { if } u = \alpha  \\
r_i & \mbox { if } u\neq \alpha \end{array} \right. \mbox { for } u\in V,  1\leq i\leq k, 
\end{equation}

\begin{equation} \label{multcond}
 m(\alpha^p,U)=\lambda \binom{n-\ell+1}{p} \mbox { for } p\geq 0, U\subset V\backslash\{\alpha\}  \mbox { with }  |U|=h-p, \mbox { and }  
\end{equation} 
\begin{equation} \label{conncondit}
\mathcal G_i \mbox { is connected if } r_i\geq 2, \mbox{ for } 1\leq i\leq k.
\end{equation} 
\end{theorem}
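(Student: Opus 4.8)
The plan is to prove Theorem~\ref{seemstmainres} by downward induction on $\ell$, starting from $\ell = n$ and ending at $\ell = 1$, which will recover Theorem~\ref{connbaranyaift}. For the base case $\ell = n$, conditions \eqref{degcolcond} and \eqref{multcond} force $\mathcal G$ to have vertex set $V$ of size $n$ with $\alpha$ playing no special role: $m(\alpha^p,U) = \lambda\binom{1}{p}$ is $\lambda$ when $p \in \{0,1\}$ and $0$ otherwise, so $\mathcal G$ is a loopless $h$-uniform hypergraph on $n$ vertices with every $h$-subset appearing exactly $\lambda$ times, i.e. $\mathcal G = \lambda K_n^h$, and each vertex has degree $\lambda\binom{n-1}{h-1} = \sum_i r_i$. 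Thus the base case is precisely the assertion that $\lambda K_n^h$ admits a $k$-edge-coloring in which color class $i$ is $r_i$-regular and, when $r_i \geq 2$, connected. I would establish this base case \emph{directly} from Baranyai's Theorem~\ref{baranyaift} (applied to produce an $(r_1,\dots,r_k)$-factorization of $\lambda K_n^h$, noting $\lambda K_n^h$ is $\lambda K_n^h$ and Baranyai gives almost-regular parts of prescribed sizes which here are forced to be regular), and \emph{not} worry about connectivity at this stage --- connectivity of the factors on $n$ vertices is the \emph{goal} $\ell = 1$ rather than a hypothesis; so actually the induction should run the other way.

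Let me restate the plan correctly. The induction is \emph{upward} on $n - \ell$, equivalently we go from $\ell = 1$ up to $\ell = n$ in the statement but the logical direction is: assume the result for $\ell$, deduce it for $\ell + 1$; wait --- rather, the cleanest formulation is induction where the case $\ell = 1$ is Theorem~\ref{connbaranyaift} and the case $\ell = n$ is easy. Concretely: for $\ell = n$, take $\mathcal G$ to be $\lambda K_n^h$ with \emph{any} $(r_1,\dots,r_k)$-factorization provided by Baranyai's theorem; conditions \eqref{degcolcond},\eqref{multcond} hold automatically, and for \eqref{conncondit} we invoke the first author's connected Baranyai result --- but that is circular, so instead for $\ell = n$ we must \emph{also} only need non-connected factors, yet \eqref{conncondit} demands connectivity. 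The resolution the author surely intends: start at $\ell = 1$ with a \emph{single vertex} $\alpha$ (all edges are $h$-loops $\{\alpha^h\}$ with multiplicity $\lambda\binom{n}{h}$, colored so that $d_i(\alpha) = r_i n$, each color class trivially ``connected''), and then \textbf{detach} $\alpha$ one vertex at a time, going from $\ell$ to $\ell + 1$ vertices, until reaching $\ell = n$, where no loops remain and $\mathcal G = \lambda K_n^h$ with each color class $r_i$-regular and connected. So the main result $\ell = n$ is the \emph{end} of the induction. I would restructure: induct on $\ell$ from $1$ to $n$; base $\ell = 1$ trivial; inductive step: given $\mathcal G$ on $\ell$ vertices satisfying \eqref{degcolcond}--\eqref{conncondit}, produce $\mathcal G'$ on $\ell + 1$ vertices doing the same.

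The heart of the argument is the inductive (detachment) step. Given $\mathcal G$ on $\ell < n$ vertices with special vertex $\alpha$, I would split $\alpha$ into $\alpha$ and a new vertex $\beta$, distributing the hinges at $\alpha$ so that $\beta$ receives roughly a $1/(n-\ell+1)$ fraction of everything. Precisely, apply the Nash-Williams Lemma~\ref{laminarlem} with $S = H_{\mathcal G}(\alpha)$, $n$ replaced by $n - \ell + 1$, and the two laminar families $\scr A$ (from Lemma~\ref{lamAlem}) and $\scr B$ (from Lemma~\ref{lamBlem}); let $A$ be the resulting hinge set, and form $\mathcal G'$ by reassigning to $\beta$ exactly the hinges in $A$ (so an edge $\{\alpha^p, u_1,\dots,u_z\}$ with $i$ of its $p$ hinges at $\alpha$ landing in $A$ becomes $\{\alpha^{p-i},\beta^i,u_1,\dots,u_z\}$). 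The membership estimates $|A \cap P| \approx |P|/(n-\ell+1)$ for $P \in \scr A \cup \scr B$ then give: from $P = H_i$, the new degrees $d_i'(\alpha) = r_i(n-\ell)$, $d_i'(\beta) = r_i$, matching \eqref{degcolcond} with $\ell + 1$; from $P = H_p^U \in \scr B$ together with parallel-edge laminar sets, the new multiplicities $m'(\alpha^p,U)$, $m'(\alpha^{p-1}\beta, U)$, etc., work out to $\lambda\binom{n-\ell}{p}$ as in \eqref{multcond} (this uses Pascal's identity $\binom{n-\ell+1}{p} = \binom{n-\ell}{p} + \binom{n-\ell}{p-1}$ and that $\approx$ forces the exact integer values since the total is an integer and the two pieces must be consecutive integers summing to it); and from $P = H^i$ and $P = H_W$ for $\alpha$-wings $W$, the connectivity condition of Lemma~\ref{simpleconlem} is met for each color $i$ with $r_i \geq 2$, because $1 \leq |A \cap H^i| < |H^i|$ will hold whenever some wing has $d_W(\alpha) \geq 2$ and $H^i$ is nonempty --- which it is since $\mathcal G_i$ is connected with $d_i(\alpha) = r_i(n-\ell+1) \geq 2$ and more than one vertex (for $\ell \geq 2$; the passage $\ell = 1 \to \ell = 2$ needs a separate tiny check). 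Hence \eqref{conncondit} is preserved.

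The main obstacle, and the place requiring genuine care, is \textbf{verifying that the single Nash-Williams set $A$ simultaneously delivers all three conclusions} --- in particular that the connectivity requirement from Lemma~\ref{simpleconlem} is compatible with, and actually implied by, the $\approx$-estimates on the wing-hinge sets $H_W$ and the aggregated sets $H^i$. One must check that including $H^1,\dots,H^k$ and all the $H_W$'s in the laminar family $\scr A$ (legitimate by Lemma~\ref{lamAlem}) forces, for each color $i$ with $r_i \geq 2$, the existence of at least one wing $W \in \scr W_\alpha(\mathcal G_i)$ with $d_W(\alpha) \geq 2$ and $1 \leq |A \cap H_W| < d_W(\alpha)$; the subtlety is that $\approx$ only pins values to within one of $|H_W|/(n-\ell+1)$, so when $d_W(\alpha)$ is small (e.g. exactly $2$ or $3$) one needs the estimate to be strong enough to land strictly between $0$ and $d_W(\alpha)$, and one may need to argue via the aggregate $H^i$ (whose cardinality is larger) that \emph{some} wing is ``split'' even if not every individual small wing is. A secondary technical point is the bookkeeping for \eqref{multcond}: showing that the family $\scr B$ of the $H_p^U$ together with the parallel-class sets inside $\scr A$ genuinely forces each of the finitely many new multiplicity values to its exact target via Pascal's recursion and an integrality/consecutiveness argument. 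Once these verifications are complete, iterating the step $n - 1$ times and reading off the case $\ell = n$ gives Theorem~\ref{connbaranyaift}, since on $n$ vertices \eqref{multcond} kills all loops and leaves $\lambda K_n^h$, \eqref{degcolcond} makes each color class $r_i$-regular, and \eqref{conncondit} makes it connected whenever $r_i \geq 2$; combined with the necessity Lemma~\ref{connbaranyaiftnecc}, this is exactly the claimed equivalence.
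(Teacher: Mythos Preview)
Your overall architecture---induct on $\ell$ starting from the one-vertex hypergraph of $h$-loops, and at each step detach a new vertex $\beta$ from $\alpha$ using the hinge set $A$ produced by Nash--Williams' Lemma~\ref{laminarlem} applied to the laminar families $\scr A,\scr B$---is exactly the paper's approach, and your degree and multiplicity bookkeeping (via Pascal's identity) is correct. However, there is a genuine gap in the connectivity argument.

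You assert that $H^i$ is nonempty ``since $\mathcal G_i$ is connected with $d_i(\alpha)=r_i(n-\ell+1)\ge 2$.'' This does not follow: a connected hypergraph can have $\alpha$ as a cut vertex with every $\alpha$-wing $W$ satisfying $d_W(\alpha)=1$ (think of a star), in which case $H^i=\varnothing$. More to the point, even if $H^i\ne\varnothing$, you need $|A\cap H^i|\ge 1$ to invoke Lemma~\ref{simpleconlem}, and the estimate $|A\cap H^i|\approx |H^i|/(n-\ell+1)$ gives this only if $|H^i|$ is large enough. The paper closes this gap by \emph{strengthening the induction hypothesis}: in addition to \eqref{degcolcond}--\eqref{conncondit}, one carries along the invariant that for each $i$ with $r_i\ge 2$ and $\ell\le n-1$, $\delta_i:=|H^i|=r_i(n-\ell+1)=d_i(\alpha)$; equivalently, \emph{no} $\alpha$-wing of $\mathcal G_i$ has $d_W(\alpha)=1$. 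With this in hand one gets $|A\cap H^i|\approx r_i\ge 2$, so some wing $W$ with $d_W(\alpha)\ge 2$ satisfies $1\le |A\cap H_W|<d_W(\alpha)$, and Lemma~\ref{simpleconlem} applies.

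The nontrivial part you are missing is that this extra invariant must then be \emph{propagated}: one has to show that every $\alpha$-wing of $\mathcal G'_i$ again has degree $\ge 2$ at $\alpha$ (when $\ell\le n-2$). The paper does this by a short case analysis on whether a given wing $W$ of $\mathcal G_i$ has $|H_W|\ge 3$ (then $|A\cap H_W|\le\lceil |H_W|/3\rceil$ leaves at least two hinges at $\alpha$) or $|H_W|=2$ (then if one hinge moves to $\beta$, the fact that $|A\cap H^i|\ge 2$ forces a second wing $T$ to also be split, and $W'\cup T'$ merge through $\beta$ into a single $\alpha$-wing of $\mathcal G'_i$ with degree $\ge 2$). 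Without carrying and re-establishing this wing invariant, the induction does not close. (The paper also records the side condition $|H_e|\le n-\ell+1$, but that one is already a consequence of \eqref{multcond}.)
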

\begin{remark} \textup{
Theorem \ref{connbaranyaift} follows from Theorem \ref{seemstmainres} in the case where $\ell=n$ as the following argument shows.  If $\ell=n$, then conditions (\ref{degcolcond})--(\ref{conncondit}) imply that 
we have an $n$-vertex $k$-edge-colored hypergraph $\mathcal G$ in which the $i^{th}$ color class is $r_i$-regular by (\ref{degcolcond}), and connected by (\ref{conncondit}). Moreover,  (\ref{multcond}) implies that for $U\subset V\backslash\{\alpha\}$, (i) $m(U)=\lambda \binom{1}{0}=\lambda$ if  $|U|=h$ (when $p=0$), (ii) $m(\alpha,U)=\lambda \binom{1}{1}=\lambda$ if  $|U|=h-1$ (when $p=1$), and (iii) $m(\alpha^p,U)=\lambda \binom{1}{p}=0$ for $p\geq 2$, and $|U|=h-p$. Therefore $\mathcal G\cong \lambda K_n^h$. 
}\end{remark}
\begin{proof} 
The proof is by induction on $\ell$. At each step we will assume  not only  that $\mathcal G$ is  an $\ell$-vertex $k$-edge-colored hypergraph with vertex set $V$ ($\alpha\in V$) satisfying conditions (\ref{degcolcond})--(\ref{conncondit}), but that $\mathcal G$ also satisfies the two additional properties 
\begin{equation} \label{simplicity}
|H_e|\leq n-\ell+1 \mbox{ for each edge } e  \mbox { of } \mathcal G, \mbox{ and }
\end{equation}  
\begin{equation}\label{wingeq}
\mbox{ for } 1\leq i\leq k, \mbox{ if } r_i\geq 2 \mbox { and if } \ell\leq n-1,\mbox{ then } \delta_i =r_i(n-\ell+1) 
\end{equation}
where $\delta_i=|H^i|$ for $1\leq i\leq k$.

First consider the base case when $\ell=1$. Let $\mathcal F$ be a hypergraph with a single vertex $\alpha$ incident with $\lambda \binom{n}{h}$ $h$-loops; i.e. $m(\alpha^h)=\lambda\binom{n}{h}$. Color the edges of $\mathcal F$ such that $m_i(\alpha^h)=r_in/h$ for $1\leq i\leq k$. This is possible since by (ii) $h\divides r_in$, and by (iii) $\sum_{i=1}^k m_i(\alpha^h)=\sum_{i=1}^k r_in/h=n/h\sum_{i=1}^k r_i=\lambda n \binom{n-1}{h-1}/h=\lambda\binom{n}{h}=m(\alpha^h)$. Also, note  that for $\ell=1$, the hypergraph $\mathcal F$ trivially  satisfies (\ref{simplicity}), and since each $h$-loop is an $\alpha$-wing, $\mathcal F$ also satisfies (\ref{wingeq}).
Therefore, $\mathcal F$ shows that  conditions (\ref{degcolcond})--(\ref{wingeq})  holds for $\ell=1$. 

Now suppose that  $1\leq \ell < n$, and that $\mathcal G$  satisfies (\ref{degcolcond})--(\ref{wingeq}). The proof is completed by showing that $\mathcal G$ has an $(\ell+1)$-vertex $\alpha$-detachment  $\mathcal G'$ with vertex set $V'=V\cup \{\beta\}$ satisfying
  
\begin{equation} \label{simplicity'}
|H'_e|\leq n-\ell \mbox{ for each edge } e  \mbox { of } \mathcal G', 
\end{equation}
 \begin{equation} \label{degcolcond'}
d'_i(u) = \left \{ \begin{array}{ll}
r_i(n-\ell) & \mbox { if } u = \alpha  \\
r_i & \mbox { if } u\neq \alpha \end{array} \right. \mbox { for } u\in V', 1\leq i\leq k, 
 \end{equation}
\begin{equation} \label{multcond'}
 m'(\alpha^p,U)=\lambda \binom{n-\ell}{p} \mbox { for } p\geq 0, U\subset V'\backslash\{\alpha\} \mbox { with } |U|=h-p, 
\end{equation} 
\begin{equation} \label{conncondit'}
\mathcal G'(i) \mbox { is connected if } r_i\geq 2, \mbox{ for } 1\leq i\leq k, \mbox { and }  
\end{equation} 
for $1\leq i\leq k$, if $r_i\geq 2$ and if $\ell<n-1$,    then
\begin{equation} \label{wingdeg'}
\delta'_i =r_i(n-\ell).
\end{equation}

Let $\scr A$ and $\scr B$ be the  laminar families  in Lemmas \ref{lamAlem}, and \ref{lamBlem}. By Lemma \ref{laminarlem}, there exists a subset $A$ of $H(\alpha)$ such that 
\begin{equation}
|A\cap P|\approx |P|/(n-\ell+1) \mbox{ for every   } P\in \scr A \cup \scr B.
\end{equation}
Let $\mathcal G'$ be the hypergraph obtained from $\mathcal G$ by splitting $\alpha$ into two vertices $\alpha$ and  $\beta$ in such a way that hinges which were incident with $\alpha$ in $ \mathcal G$ become incident in $\mathcal G'$ with $\alpha$ or $\beta$ according as they do not or do belong to $A$, respectively. More precisely,
 \begin{equation}\label{hinge1'} 
 H'(\beta)=A, \quad H'(\alpha)=H( \alpha)\backslash A. 
 \end{equation}

Since $H_i\in \scr A$ for $1\leq i\leq k$, we have  
\begin{eqnarray*}
d'_i(\beta)& = & |A\cap H_i|\\
& \approx & |H_i|/(n-\ell+1)=d_i(\alpha)/(n-\ell+1) \\
& = &r_i(n-\ell+1)/(n-\ell+1)= r_i,\\
d'_i(\alpha) & = &   d_i(\alpha) - d'_i(\beta) \nonumber \\
& =&  r_i(n-\ell+1)-r_i=r_i(n-\ell),
\end{eqnarray*}
and for $u\notin\{\alpha,\beta\}$, $d'_i(u)=d_i(u)=r_i$. Therefore $\mathcal G'$ satisfies (\ref{degcolcond'}). 

Let $e$ be an edge in $\mathcal G$ incident with $\alpha$. Then $H_e\in \scr A$, and so
$$|A\cap H_e|\approx |H_e|/(n-\ell+1)\leq 1,$$ 
observing that the last inequality implies from (\ref{simplicity}). This means that either $A\cap H_e=\varnothing$  or $|A\cap H_e|=1$. Therefore $m'(\beta^q,U)=0$ for $q\geq 2$ and $U\subset V'$. Also, note that if $|H_e|=n-\ell+1$, then $|A\cap H_e|=1$ and thus $|H'_e|=n-\ell$, and if $|H_e|<n-\ell+1$, then $|H'_e|\leq |H_e|\leq n-\ell$, both cases together proving (\ref{simplicity'}). 

Since for $p\geq 1$, and $U\subset V\backslash \{\alpha\}$,  $H_p^U\in \scr B$, we have
\begin{eqnarray*}
m'(\alpha^{p-1}, \beta, U)& = & |A\cap H_p^U|\\
& \approx & |H_p^U|/(n-\ell+1)=pm(\alpha^p, U)/(n-\ell+1) \\
& = &\lambda p\binom{n-\ell+1}{p}/(n-\ell+1)= \lambda\binom{n-\ell}{p-1},\\
m'(\alpha^p, U) & = &   m(\alpha^p,U) -  m '(\alpha^{p-1}, \beta, U) \nonumber \\
& =& \lambda \binom{n-\ell+1}{p}-\lambda \binom{n-\ell}{p-1}=\lambda \binom{n-\ell}{p}.
\end{eqnarray*}
Therefore $\mathcal G'$ satisfies (\ref{multcond'}).

Let us fix an $i$, $1\leq i\leq k$ such that $r_i\geq 2$. Let $W$ be an $\alpha$-wing of $\mathcal G_i$ with $d_W(\alpha)\geq 2$. Then $H_W\in \scr A$, and so
\begin{equation}\label{conreq0}
|A\cap H_W|\approx |H_W|/(n-\ell+1)=d_W(\alpha)/(n-\ell+1),
 \end{equation} 
which implies that (noting that $n-\ell+1\geq 2$)
\begin{equation}\label{conreq1}
|A\cap H_W|<|H_W|.
\end{equation} 
Moreover,
\begin{equation}\label{conreq2}
|A\cap H^i|\approx |H^i|/(n-\ell+1)=\delta_i/(n-\ell+1)=r_i\geq 2,
\end{equation}
and therefore there exists an $\alpha$-wing $W$ in $\mathcal G_i$  with $d_W(\alpha)\geq 2$, such that $A\cap H_W\neq \varnothing$. Therefore by Lemma \ref{simpleconlem}, 
$\mathcal G'_i$ is connected. 

Now, suppose that $\ell\leq n-2$, or equivalently that $n-\ell+1\geq 3$. Since $\delta_i=d_i$ by (\ref{degcolcond}) and (\ref{wingeq}), we have that for every $W\in \mathscr W_\alpha(\mathcal G_i)$, $d_W(\alpha)\geq 2$. So there is no $\alpha$-wing $W$ in $\mathcal G_i$ with $d_W(\alpha)=1$. Let us fix an $\alpha$-wing $W$ in $\mathcal G_i$. There are two cases to consider:
\begin{itemize}
\item Case 1: If $|H_W|\geq 3$, then since $|A\cap H_W|\approx |H_W|/(n-\ell+1)\leq |H_W|/3$, we have that $d'_{W'}(\alpha)\geq 2$, and 
thus $\delta'_i=d'_i(\alpha)=r_i(n-\ell)$. Note that $W'$ is a sub-hypergraph of some $\alpha$-wing $S$ in $\mathcal G'$ with $d'_S(\alpha)\geq 2$. 

\item Case 2: If $|H_W|=2$, then $|A\cap H_W|\approx |H_W|/(n-\ell+1)=2/(n-\ell+1) \leq  2/3$. So $|A\cap H_W|\in \{0,1\}$. If $A\cap H_W=\varnothing$, we are done. So let us assume that $|A\cap H_W|=1$. Recall from (\ref{conreq2}) that $|A\cap H^i|\geq 2$. Therefore, there is another $\alpha$-wing $T$ in $\mathcal G_i$ with $|H_T|\geq 2$ such that $1\leq |A\cap H_T|<|H_T|$. Therefore, there exists an $\alpha$-wing $S$ in $\mathcal G'$ with $W'\cup T'\subset S$, and $d'_S(\alpha)\geq 2$.
Thus, in this case also we have $\delta'_i=\delta_i-r_i=r_i(n-\ell)$.
\end{itemize}
Therefore  $\mathcal G'$ satisfies (\ref{wingdeg'}) and the proof is complete.
\end{proof}

\section{Acknowledgement}
This research was carried out while the author was a PhD student at Auburn University. The author is deeply grateful to his supervisor Professor Chris Rodger, his colleague Joe Chaffee, and the anonymous referee for their constructive comments.



\bibliographystyle{model1-num-names}
\bibliography{<your-bib-database>}

\begin{thebibliography}{99}           
\bibitem{BahRod1} M.A. Bahmanian, C.A. Rodger, Multiply balanced edge colorings of multigraphs, J. Graph Theory 70 (2012) 297--317.
\bibitem{BahHyp1} M.A. Bahmanian, Detachments of amalgamated 3-uniform hypergrpahs I: factorization consequences, J. Combin. Designs 20 (2012), 527--549.
\bibitem{BahHyp2} M.A. Bahmanian, Detachments of hypergraphs I: the Berge-Johnson problem, Combin. Probab. Comput. 21 (2012),  483--495.
\bibitem{BahRod2} M.A. Bahmanian, C.A. Rodger, Embedding an Edge-colored $K(a^{(p)};\lambda,\mu)$ into a Hamiltonian Decomposition of $K(a^{(p+r)};\lambda,\mu)$, Graphs and Combinatorics 29 (2013)  no. 4, 747--755.
\bibitem{BahRod3} M.A. Bahmanian, C.A. Rodger, Extending Partial Edge-colorings of Complete $3$-uniform Hypergraphs to $r$-factorizations, Journal of Graph Theory  Volume 73 (2013), Issue 2,  216--224.
\bibitem {Baran75} Zs. Baranyai, On the factorization of the complete uniform hypergraph, Colloq. Math. Soc. Janos Bolyai 10,  (1975)  91--108.
\bibitem {H2} A.J.W. Hilton, Hamiltonian decompositions of complete graphs, J. Combin. Theory B 36 (1984), 125--134.
\bibitem {HR} A.J.W. Hilton, C.A. Rodger, Hamilton decompositions of complete regular $s$-partite graphs, Discrete Math. 58 (1986), 63--78.
\bibitem {MatJohns} M. Johnson, Amalgamations of factorizations of complete graphs, J. Combin. Theory B 97 (2007), 597--611.
\bibitem{Kat91} G.O.H. Katona, R{\'e}nyi and the combinatorial search problems, Studia Sci. Math. Hungar. 26 (1991) 363--378.
\bibitem {L} E. Lucas, R\'ecr\'eations Math\'ematiques, Vol. 2, Gauthiers Villars, Paris, 1883. 
\bibitem {Nash87} C.St.J.A. Nash-Williams, Amalgamations of almost regular edge-colourings of simple graphs, J. Combin. Theory B 43 (1987) 322--342.  

\end{thebibliography}







\end{document}